\tikzset{
  curarrow/.style={
  rounded corners=8pt,
  execute at begin to={every node/.style={fill=red}},
    to path={-- ([xshift=-50pt]\tikztostart.center)
    |- (#1) node[fill=white] {$\scriptstyle \delta$}
    -| ([xshift=50pt]\tikztotarget.center)
    -- (\tikztotarget)}\
    }
}
\newtheorem{theorem}{Theorem}[section]
\newtheorem{corollary}[theorem]{Corollary}
\newtheorem{lemma}[theorem]{Lemma}
\newtheorem{prop}[theorem]{Proposition}
\theoremstyle{definition}
\newtheorem{remark}[theorem]{Remark}
\newcommand{\kk}{\Bbbk}
\newcommand{\calF}{\mathcal{F}}
\newcommand{\calG}{\mathcal{G}}
\newcommand{\AAA}{\mathbb{A}}
\newcommand{\ZZ}{\mathbb{Z}}
\newcommand{\QQ}{\mathbb{Q}}
\newcommand{\RR}{\mathbb{R}}
\newcommand{\eb}{\mathbf{e}}
\def\opn#1#2{\def#1{\operatorname{#2}}} % to make operators
\opn\Cl{Cl} \opn\pdim{pdim} \opn\Im{Im} \opn\Ker{Ker} \opn\ini{in}
\title{The $h$-vectors of the edge rings of a special family of graphs}
\author{Akihiro Higashitani}
\address{Department of Pure and Applied Mathematics, Graduate School of Information Science and Technology, Osaka University, Suita, Osaka 565-0871, Japan}
\email{higashitani@ist.osaka-u.ac.jp}
\author{Nayana Shibu Deepthi}
\address{Department of Pure and Applied Mathematics, Graduate School of Information Science and Technology, Osaka University, Suita, Osaka 565-0871, Japan}
\email{nayanasd@ist.osaka-u.ac.jp}
\subjclass[2010]{Primary 13P10, %Gr¨obner bases; other bases for ideals and modules (e.g., Janet and border bases)
Secondary 05E40} %Combinatorial aspects of simplicial complexes
\keywords{Edge rings, toric ideals, Gr\"{o}bner basis, initial complex, $h$-vector, almost Gorenstein}
\begin{document}

\begin{abstract}
The $h$-vectors of homogeneous rings are one of the most important invariants that often reflect ring-theoretic properties. 
On the other hand, there are few examples of edge rings of graphs whose $h$-vectors are explicitly computed. 
In this paper, we compute the $h$-vector of a special family of graphs, by using the technique of initial ideals and the associated simplicial complex. 
\end{abstract}

\maketitle

\section{Introduction}

Throughout the paper, we assume all graphs to be connected and having no loops and multiple edges.

Many researchers have conducted intensive studies on the edge rings and toric ideals of graphs. 
Especially, Ohsugi and Hibi initiated the investigation and have been developing the theory of edge rings. 
For instance, the characterization for the edge rings to be normal was given in \cite{OH98}. 
Note that, almost at the same time, the same result was obtained by Simis-Vasconcelos-Villarreal independently in \cite{SVV}. 
For the introduction to the edge rings and toric ideals of graphs, we refer the readers to \cite[Section  5]{HHO} and \cite[Section 10]{Villa}. 

One of the most important invariants of homogeneous rings is the Hilbert series. 
In fact, the Gorensteinness of homogeneous normal Cohen--Macaulay domains is characterized by the symmetry of their $h$-vectors (\cite{S78}). 
Moreover, there are many other results claiming that the $h$-vectors of homogeneous (or semi-standard graded) normal Cohen--Macaulay rings (or domains) 
have some connection with their ring-theoretic properties (see, e.g., \cite{BD, H, HY, Y}, and so on). 
On the other hand, the $h$-vector of an edge ring is scarcely known. 
As far as we know, the $h$-vectors (or their counterparts) of the edge rings of the following graphs have been computed: 
\begin{itemize}
\item Let $K_{m,n}$ denote the complete bipartite graph with $m+n$ vertices. Then 
$$h(\kk[K_{m,n}];t)=\sum_{i=0}^{\min\{m,n\}}\binom{m-1}{i}\binom{n-1}{i}t^i.$$
\item Let $K_m$ denote the complete graph with $m$ vertices. Then 
$$h(\kk[K_m];t)=1+\frac{m(m-3)}{2}t+\sum_{i=2}^{\left\lfloor \frac{m}{2} \right\rfloor}\binom{m}{2i}t^i.$$
\item The Hilbert functions of the edge rings of complete multipartite graphs were computed in \cite[Theorem 2.6]{OH00}. 
\item The Hilbert series of the edge rings of bipartite graphs are described using their interior polynomials (\cite{KP}). 
\end{itemize}
(On the notations used above, see Section~\ref{sec:pre}.) 
Regarding the results on $K_{m,n}$ and $K_m$, see \cite{V96}, or \cite[Section 10]{Villa}. 
Note that if the edge rings of the graphs are normal, then the Hilbert functions (resp. $h$-vectors) of the edge rings 
agree with the ``Ehrhart polynomials (resp. $h^*$-vectors) of the edge polytopes arising from the graphs''. 
The goal of this paper is to explicitly compute the $h$-vector for some particular class of graphs in order to suggest more examples. 

Let $n \geq 2$ be an integer. We introduce a connected non-bipartite graph $\mathcal{G}_{n}$ as shown in Figure~\ref{fig1}. 
Clearly, $\mathcal{G}_{n}$ satisfies the odd cycle condition (see Section~\ref{sec:pre}). %and therefore, the edge ring $\kk[\mathcal{G}_{n}]$ is normal.

\begin{figure}[h]
\centering
\begin{tikzpicture}
\draw[black, thin] (3,1) -- (5,2) -- (2.6,2.5)-- cycle;
\draw[black, thin] (5,2) -- (4,4) -- (6,4)-- cycle;
\draw[black, thin] (5,2) -- (7.4,2.5) -- (7.1,1)-- cycle;
\filldraw [black] (3,1) circle (1pt);
\filldraw [black] (5,2) circle (1pt);
\filldraw [black] (2.6,2.5) circle (1pt);
\filldraw [black] (4,4) circle (1pt);
\filldraw [black] (6,4) circle (1pt);
\filldraw [black] (7.4,2.5) circle (1pt);
\filldraw [black] (7.1,1) circle (1pt);
\filldraw [black] (6.5,2.9) circle (0.6pt);
\filldraw [black] (6.3,3.3) circle (0.6pt);
\filldraw [black] (6.4,3.1) circle (0.6pt);
\filldraw [black] (7.5,2) node[anchor=north] {$z_{n}$};
\filldraw [black] (7.2,2.8) node[anchor=west] {$u^{(1)}_{n}$};
\filldraw [black] (6.1,2.5) node[anchor=west] {$x_{n}$};
\filldraw [black] (6.4,1.2) node[anchor=east] {$y_{n}$};
\filldraw [black] (4.8,1.8) node[anchor=west] {$w$};
\filldraw [black] (7.1,1) node[anchor=west] {$u^{(2)}_{n}$};
\filldraw [black] (2.3,2.8) node[anchor=west] {$u^{(2)}_{1}$};
\filldraw [black] (2.8,0.8) node[anchor=west] {$u^{(1)}_{1}$};
\filldraw [black] (4.2,2.9) node[anchor=south] {$x_{2}$};
\filldraw [black] (3.4,4.2) node[anchor=west] {$u^{(1)}_{2}$};
\filldraw [black] (6.1,3) node[anchor=east] {$y_{2}$};
\filldraw [black] (5.9,4.2) node[anchor=west] {$u^{(2)}_{2}$};
\filldraw [black] (5.3,4.2) node[anchor=east] {$z_{2}$};
\filldraw [black] (3.8,2.2) node[anchor=south] {$y_{1}$};
\filldraw [black] (2.9,1.7) node[anchor=east] {$z_{1}$};
\filldraw [black] (4.4,1.3) node[anchor=east] {$x_{1}$};
\end{tikzpicture}
\caption{The graph $\calG_n$}
\label{fig1}
\end{figure}

%Our goal is to show that $\kk[\calG_{n}]$ is almost Gorenstein ring but not Gorenstein for any $n \geq 2$.
Our main result of this paper is the following: 
\begin{theorem}\label{thm:main}
The $h$-polynomial of $\kk[\calG_n]$ is as follows:
$$h(\kk[\calG_n];t)=\binom{n}{0}+\left(\binom{n}{1}-1\right)t+\binom{n}{2}t^2+\cdots+\binom{n}{n}t^n=(1+t)^n - t.$$
Moreover, $\kk[\calG_n]$ is almost Gorenstein but not Gorenstein if $n \geq 3$. 
\end{theorem}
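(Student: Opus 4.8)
The plan is to compute the toric ideal $I_{\calG_n}\subseteq S:=\kk[x_i,y_i,z_i:1\le i\le n]$, where $x_i,y_i$ are the two edges of the $i$-th triangle $T_i$ incident to the apex $w$ and $z_i$ is the remaining edge, then pass to a squarefree initial ideal, read the $h$-polynomial off the resulting simplicial complex, and finally treat the (almost) Gorenstein statements via the canonical module. The structural fact driving everything is that $x_iy_iz_j$ and $x_jy_jz_i$ become the same monomial in the edge ring (namely $t_w^2$ times the product of the vertex variables of the four non-apex vertices of $T_i$ and $T_j$), so $f_{ij}:=x_iy_iz_j-x_jy_jz_i\in I_{\calG_n}$ for all $i\neq j$; these are precisely the binomials of the even closed walks running around $T_i$ and then $T_j$. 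I would first show that $\{f_{ij}:1\le i<j\le n\}$ generates $I_{\calG_n}$ — either through the structure theory for edge rings satisfying the odd cycle condition, or by checking that $S/\langle f_{ij}\rangle$ is reduced of Krull dimension $2n+1$ and comparing Hilbert series with the semigroup ring. Then, for the lexicographic order with $x_1>y_1>z_1>x_2>y_2>z_2>\cdots$, one has $\ini(f_{ij})=x_iy_iz_j$ for $i<j$; the only $S$-pairs needing attention come from the overlaps of $\ini(f_{ij})$ with $\ini(f_{ik})$ and of $\ini(f_{im})$ with $\ini(f_{i'm})$, and each reduces to $0$ against a third $f_{\bullet\bullet}$, so $\{f_{ij}\}$ is a Gröbner basis and
$$\ini(I_{\calG_n})=M_n:=\langle\, x_iy_iz_j : 1\le i<j\le n\,\rangle .$$
Since $M_n$ is squarefree, $\kk[\calG_n]$ and $S/M_n$ have the same $h$-polynomial, and the initial complex $\Delta_n$ of $M_n$ is shellable — its facets are indexed by a parameter $k\in\{1,\dots,n\}$ (with $z_1,\dots,z_k$ in the facet and $z_{k+1},\dots,z_n$ out) together with a choice of $x_i$ or $y_i$ for each $i<k$, giving $2^{k-1}$ facets for each $k$ — so in particular $\kk[\calG_n]$ is Cohen--Macaulay.

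To get the $h$-polynomial, write $\operatorname{HS}(S/M_n)=P_n(t)/(1-t)^{2n+1}$. The short exact sequence
$$0\longrightarrow\bigl(S/(M_n:z_n)\bigr)(-1)\xrightarrow{\ \cdot z_n\ }S/M_n\longrightarrow S/(M_n+z_nS)\longrightarrow 0,$$
the identification $(M_n:z_n)=\langle x_iy_i:1\le i\le n-1\rangle$ (a complete intersection of $n-1$ quadrics, numerator $(1+t)^{n-1}$), and the isomorphism $S/(M_n+z_nS)\cong(S'/M_{n-1})[x_n,y_n]$ with $S'=\kk[x_i,y_i,z_i:i\le n-1]$, together give
$$P_n(t)=P_{n-1}(t)+t(1+t)^{n-1},\qquad P_1(t)=1,$$
whence $P_n(t)=1+\bigl((1+t)^n-(1+t)\bigr)=(1+t)^n-t$, the asserted $h$-polynomial. (As a sanity check, $P_n(1)=2^n-1$ equals the number of facets of $\Delta_n$; alternatively one may shell $\Delta_n$ directly and assemble the $h$-vector from the restriction faces.)

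For the ring-theoretic conclusions, $\kk[\calG_n]$ is a Cohen--Macaulay graded domain — indeed normal, since $\calG_n$ satisfies the odd cycle condition — so by Stanley's criterion it is Gorenstein precisely when its $h$-vector is symmetric; since $h_1=n-1\neq n=h_{n-1}$ for $n\ge3$ (while $(1,1,1)$ is symmetric for $n=2$), it is not Gorenstein for $n\ge3$. For the almost Gorenstein property I would compute the graded canonical module $\omega$ via the Danilov--Stanley description (the ideal of $\kk[\calG_n]$ spanned by the relative-interior lattice points of the cone over the edge polytope), determine its minimal generators from the combinatorics of the affine semigroup, and produce the defining short exact sequence $0\to\kk[\calG_n]\to\omega(-a)\to C\to 0$ with $a=-(n+1)$ the $a$-invariant, then verify that $C$ has the structure demanded by the definition of almost Gorenstein. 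I expect this last step to be the main obstacle: the Hilbert series of $C$ is already forced to be $\dfrac{t-t^{n-1}}{(1-t)^{2n+1}}$, so the real work is to describe the interior of the affine semigroup of $\calG_n$ sharply enough to realize this sequence and to pin down $C$ as a module — and it is exactly there that the "almost Gorenstein but not Gorenstein for $n\ge3$" dichotomy must be established, the $h$-polynomial itself being only a routine recursion once the Gröbner basis above is in hand.
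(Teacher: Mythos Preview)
Your computation of the $h$-polynomial is correct and cleaner than the paper's. You reach the same squarefree initial ideal $M_n=\langle x_iy_iz_j:i<j\rangle$, but instead of the paper's route---describing all facets of the initial complex $\Delta_n$, exhibiting an explicit shelling, and computing the $h$-vector by an induction on the multiset of restriction numbers (their Lemma~\ref{lem:induction})---you run the colon/sum short exact sequence in $z_n$. The identifications $(M_n:z_n)=\langle x_iy_i:i\le n-1\rangle$ and $S/(M_n+z_nS)\cong(S'/M_{n-1})[x_n,y_n]$ are correct, and the recursion $P_n(t)=P_{n-1}(t)+t(1+t)^{n-1}$ with $P_1=1$ immediately gives $(1+t)^n-t$. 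This bypasses the combinatorics of the shelling entirely; the paper's approach, on the other hand, yields Cohen--Macaulayness of $S/M_n$ for free and gives a concrete picture of $\Delta_n$, which you only sketch.

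For almost Gorensteinness your proposal is not wrong, but it is incomplete and heavier than necessary. Constructing the sequence $0\to R\to\omega(-a)\to C\to 0$ and verifying directly that $C$ is Ulrich requires a full description of the interior monoid and of the module structure of $C$; you correctly flag this as the main obstacle and do not carry it out. The paper avoids this by invoking the numerical criterion \cite[Corollary~2.7]{H} (Proposition~\ref{prop:alm} here): since $r(R)-1\le\tilde e(R)$ always holds, almost Gorensteinness is equivalent to $r(R)\ge\tilde e(R)+1$. From your $h$-vector one computes $\tilde e(R)=n-2$, so it suffices to exhibit $n-1$ elements that must lie in any minimal generating set of the canonical ideal. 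The paper writes down, for $j=1,\dots,n-1$, the interior lattice points $\alpha_j=\sum_{i=1}^n(\eb_{1,i}+\eb_{2,i})+2j\,\eb'$ and checks (using the facet inequalities of the cone) that no $\alpha_j$ decomposes as an interior point plus a nonzero cone point. This gives $r(R)\ge n-1$ without ever determining the full canonical module or the cokernel $C$, and is the step you should substitute for your final paragraph.
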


The notion of almost Gorenstein homogeneous rings was introduced by Goto-Takahashi-Taniguchi in \cite{GTT}, 
as a new class of graded rings between Cohen--Macaulay rings and Gorenstein rings. 
After this work, almost Gorenstein homogeneous rings have been studied further, e.g., in \cite{H, MM}. 
On  almost Gorensteinness of edge rings, known examples of almost Gorenstein non-Gorenstein edge rings are presumably rare. 
However, almost Gorenstein edge rings arising from complete multipartite graphs were completely characterized in \cite{HM2}. 
According to \cite[Examples 1.5-1.7]{HM2}, we know that $K_{2,m}$, $K_{1,1,m}$, $K_{1,m,m}$ with $m \geq 3$ and $K_{1,1,m,m}$ with $m \geq 2$ 
give almost Gorenstein but not Gorenstein edge rings, where $K_{a_1,\ldots,a_r}$ denotes the complete $r$-partite graph.  
Theorem~\ref{thm:main} gives a new family of graphs whose edge rings are almost Gorenstein but not Gorenstein. 

\bigskip

A brief structure of this paper is as follows. 
In Section~\ref{sec:pre}, we recall some fundamental notions on homogeneous rings, 
and prepare some materials on the edge rings and toric ideals of graphs for the computation of the $h$-polynomial. 
In Section~\ref{sec:h-poly}, we compute the $h$-polynomial of $\kk[\calG_n]$ by using a certain initial complex of the toric ideal of $\calG_n$. 
In Section~\ref{sec:almGor}, we prove the almost Gorensteinness of $\kk[\calG_n]$. 
On the non-Gorensteinness of $\kk[\calG_n]$ for $n \geq 3$, see Remark~\ref{rem:Gor}.

%%%%%%%%%%%%%%%%%%%%%%%%%%%%%%%%%%%%%%%%%%%%%%%%%%%%%%%%%%%%%%%%%%%%%%%%%%%%%%%%%%%%%%%%%%%%%%%%%%%%%%%%%%%%%%%%%%%%%%%%%%%%%%%%%%%%%%%%%%%%%%%%
%%%%%%%%%%%%%%%%%%%%%%%%%%%%%%%%%%%%%%%%%%%%%%%%%%%%%%%%%%%%%%%%%%%%%%%%%%%%%%%%%%%%%%%%%%%%%%%%%%%%%%%%%%%%%%%%%%%%%%%%%%%%%%%%%%%%%%%%%%%%%%%%
%%%%%%%%%%%%%%%%%%%%%%%%%%%%%%%%%%%%%%%%%%%%%%%%%%%%%%%%%%%%%%%%%%%%%%%%%%%%%%%%%%%%%%%%%%%%%%%%%%%%%%%%%%%%%%%%%%%%%%%%%%%%%%%%%%%%%%%%%%%%%%%%
\section{Preliminaries}\label{sec:pre}

First, we recall some fundamental materials like, notions on commutative algebra, the edge rings and toric ideals of graphs. 

\subsection{Homogeneous rings}
In this subsection, we collect some notions on homogeneous rings used in this paper. 
We refer the readers to \cite{BH, Villa} for more detailed information on homogeneous rings. 

Let $R$ be a Cohen--Macaulay homogeneous ring of dimension $d$ over a field $\kk$. 
Then the Hilbert series of $R$ looks as follows:  
$$\sum_{i \geq 0}\dim_\kk R_i t^i=\frac{h_0+h_1t+\cdots+h_st^s}{(1-t)^d},$$
where $R_i$ denotes the homogeneous part of $R$ of degree $i$, $\dim_\kk$ denotes the dimension as a $\kk$-vector space, and we assume that $h_s \neq 0$. 
We call the polynomial $h_0+h_1t+\cdots+h_st^s$ appearing in the numerator as the \textit{$h$-polynomial} of $R$, denoted by $h(R;t)$, 
and the sequence of the coefficients $(h_0,h_1,\ldots,h_s)$ as the \textit{$h$-vector} of $R$, denoted by $h(R)$.

\begin{remark}\label{rem:Gor}
It follows from \cite{S78} that, if $R$ is a normal Cohen--Macaulay homogeneous domain, then $R$ is Gorenstein if and only if $h(R)$ is symmetric, 
i.e., $h_i=h_{s-i}$ for $i=0,1,\ldots,s$. 
Hence, once we get the $h$-vector of $\kk[\calG_n]$, as described in Theorem~\ref{thm:main}, we obtain that $\kk[\calG_n]$ is Gorenstein if and only if $n \leq 2$. 
\end{remark}

We denote the \textit{Cohen--Macaulay type} of $R$ by $r(R)$. 
It is well known that $r(R)$ coincides with the number of elements in the minimal system of generators of the canonical module of $R$.

\subsection{Edge rings}
In this subsection, we recall the definitions of edge rings and toric ideals of graphs, and some fundamental theorems regarding them. 

First of all, let us recall the definition of edge rings. 
Let $G$ be a graph on the vertex set $[d]$ with edge set $E(G)=\{e_1,\ldots,e_m\}$. 
We define a polynomial ring $R=\kk[t_1,\ldots,t_d]$ in $d$ variables and another one $S=\kk[x_1,\ldots,x_m]$ in $m$ variables, where $\kk$ is a field. 
Let $\pi : S \rightarrow R$ be the ring homomorphism defined by $\pi(x_i)={\bf t}^{e_i}$ for $i=1,\ldots,m$, 
where ${\bf t}^e:=t_{u}t_{v}$ for any edge $e=\{u,v\} \in E(G)$.
The image $\Im(\pi)$, which is a subalgebra of $R$, is called the \textit{edge ring} of $G$, 
and the kernel $\Ker(\pi)$, an ideal of $S$, is called the \textit{toric ideal} of $G$. 
We denote the edge ring of $G$ by $\kk[G]$ and the toric ideal of $G$ by $I_G$. Clearly, we have the ring isomorphism $\kk[G] \cong S/I_G$. 
It is known that $\kk[G]$ is $d$-dimensional if $G$ is not bipartite  (cf. \cite[Proposition 1.3]{OH98}). 

Given an edge $e=\{i,j\} \in E(G)$, let $\rho(e)=\eb_i+\eb_j$, where $\eb_1,\ldots,\eb_d \in \RR^d$ are the unit vectors of $\RR^d$. 
Let $A_G=\{\rho(e) : e \in E(G)\}$. For $\AAA \subset \RR^d$, let $\displaystyle \AAA A_G=\left\{\sum_{e\in E(G)}a_e\rho(e) :a_e \in \AAA\right\}$. 
In this paper, we only consider the cases where $\AAA$ is $\QQ_{\geq 0}$ or $\ZZ$ or $\ZZ_{\geq 0}$. 
We can regard $\kk[G]$ as a monoid algebra of an affine monoid $\ZZ_{\geq 0}A_G$. 
In particular, the structure of the cone $\QQ_{\geq 0}A_G$ plays a crucial role in the study of $\kk[G]$. 

\medskip

Next, let us describe the cone $\QQ_{\geq 0}A_G$ in terms of linear inequalities. 
For the description, we need to introduce some more notions on graphs. 
\begin{itemize}
\item For a subset $W \subset V(G)$, let $G \setminus W$ be the subgraph on $V(G) \setminus W$ with the edge set $\{e \in E(G) : e \subset V(G) \setminus W\}$. 
If $W=\{w\}$, then we write $G \setminus w$ instead of $G \setminus \{w\}$. 
\item For $v \in V(G)$, let $N_G(v)=\{u \in V(G) : \{u,v\} \in E(G)\}$, and for any subset $W \subset V(G)$, let $N_G(W)=\bigcup\limits_{w \in W}N_G(w)$. 
\item A non-empty subset $T \subset V(G)$ is called an \textit{independent set} if $\{v,w\} \not\in E(G)$ for any $v,w \in T$. 
\item We call a vertex $v$ of $G$ \textit{regular} if each connected component of $G \setminus v$ contains an odd cycle. 
\item We say that an independent set $T$ of $V(G)$ is a \textit{fundamental set} if 
\begin{itemize}
\item the bipartite graph on the vertex set $T \cup N_G(T)$ with the edge set $\{\{v,w\} : v \in T, w \in N_G(T)\} \cap E(G)$ is connected, and  
\item $T \cup N_G(T)=V(G)$, or each of the connected components of the graph $G \setminus (T \cup N_G(T))$ contains an odd cycle. 
\end{itemize}
\end{itemize}

It follows from \cite[Theorem 1.7 (a)]{OH98} that $\QQ_{\geq 0}A_G$ consists of the elements $(x_v)_{v \in V(G)} \in \QQ^d$ satisfying all the following inequalities: 
\begin{equation}\label{eq:ineq}
\begin{split}
x_u &\geq 0 \;\;\text{ for any regular vertex }u; \\
\sum_{v \in N_G(T)}x_v &\geq \sum_{u \in T}x_u \;\;\text{ for any fundamental set }T. 
\end{split}
\end{equation}

\medskip

Next, from \cite[Section 5.3]{HHO}, we shall recall how to describe the generators of the toric ideal of a graph.
Let $G$ be a graph. Given $\Gamma=(e_{i_1},\ldots,e_{i_{2r}})$, a sequence of edges of $G$, 
we call $\Gamma$ an \textit{even closed walk} if $|e_{i_j} \cap e_{i_{j+1}}|=1$ for $j=1,\ldots,2r$, where $i_{2r+1}=i_1$. 
Moreover, we associate a binomial $f_\Gamma=f_\Gamma^{(+)}-f_\Gamma^{(-)}$, where 
$$f_\Gamma^{(+)}=\prod_{k=1}^rx_{i_{2k-1}} \;\text{ and }\;f_\Gamma^{(-)}=\prod_{k=1}^rx_{i_{2k}}.$$
We say that an even closed walk $\Gamma$ is \textit{primitive} if there is no even closed walk $\Gamma'$ of $G$ with $f_{\Gamma'} \neq f_\Gamma$ 
such that $f_{\Gamma'}^{(+)}$ divides $f_\Gamma^{(+)}$ and $f_{\Gamma'}^{(-)}$ divides $f_\Gamma^{(-)}$. 
\begin{lemma}[{\cite[Lemma 5.10]{HHO}}]
The toric ideal $I_G$ is generated by the binomials $f_\Gamma$ for all primitive even closed walks $\Gamma$. 
\end{lemma}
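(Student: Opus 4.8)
The plan is to deduce the statement from the general structure of $I_G = \Ker(\pi)$ as a toric (hence binomial) ideal, together with the combinatorics of $A_G = \{\rho(e) : e \in E(G)\}$. First I would observe that $\pi$ endows $S$ with a $\ZZ^d$-grading in which $x_i$ has degree $\rho(e_i)$, and that $I_G$ is homogeneous for this grading since $\pi$ is a graded map. Thus any $f \in I_G$ decomposes into graded components, each lying in $I_G$, and within a single component all monomials share the same $\pi$-image (the image of a monomial $x^\alpha$ is $\tb^{\sum_i \alpha_i \rho(e_i)}$, which depends only on its $\rho$-degree). Writing such a component as $\sum_\alpha c_\alpha x^\alpha$ with $\sum_\alpha c_\alpha = 0$ and fixing one exponent $\alpha_0$, we get $f = \sum_\alpha c_\alpha (x^\alpha - x^{\alpha_0})$, a combination of binomials $x^a - x^b$ with $\pi(x^a) = \pi(x^b)$. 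Hence it suffices to place every such binomial in the ideal generated by the $f_\Gamma$ for primitive even closed walks.

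The combinatorial heart is to attach even closed walks to one such binomial. Fix $a, b \in \ZZ_{\geq 0}^m$ with $\pi(x^a) = \pi(x^b)$; cancelling common factors, I may assume $x^a$ and $x^b$ have disjoint support. Reading the $v$-th coordinate of $\sum_i a_i \rho(e_i) = \sum_i b_i \rho(e_i)$ and using $\rho(e) = \eb_u + \eb_v$, I would form the multigraph that takes each edge $e_i$ with multiplicity $a_i$ (``positive'') together with each $e_i$ taken with multiplicity $b_i$ (``negative''); the coordinate equalities say precisely that the positive degree equals the negative degree at every vertex. This balance lets me extract a closed walk alternating between positive and negative edges: starting along a positive edge, at each vertex the equality of positive and negative degrees supplies an unused edge of the opposite colour to continue, and returning to the start forces even length. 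Peeling off such alternating even closed walks decomposes the whole multiset into even closed walks $\Gamma_1, \dots, \Gamma_k$ with $\prod_j f_{\Gamma_j}^{(+)} = x^a$ and $\prod_j f_{\Gamma_j}^{(-)} = x^b$. The telescoping identity
$$x^a - x^b = \sum_{j=1}^{k} \Big(\prod_{l<j} f_{\Gamma_l}^{(+)}\Big)\,\Big(\prod_{l>j} f_{\Gamma_l}^{(-)}\Big)\, f_{\Gamma_j}$$
then exhibits $x^a - x^b$ in the ideal generated by the $f_{\Gamma_j}$, so $I_G$ is generated by $\{f_\Gamma : \Gamma \text{ an even closed walk}\}$.

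It remains to discard the non-primitive walks, which I would do by induction on the total degree of $f_\Gamma$. If $\Gamma$ is primitive there is nothing to prove. Otherwise there is an even closed walk $\Gamma'$ with $f_{\Gamma'} \neq f_\Gamma$ and $f_{\Gamma'}^{(+)} \mid f_\Gamma^{(+)}$, $f_{\Gamma'}^{(-)} \mid f_\Gamma^{(-)}$; choosing $\Gamma'$ minimal for divisibility makes it primitive. Writing $f_\Gamma^{(+)} = M\, f_{\Gamma'}^{(+)}$ and $f_\Gamma^{(-)} = N\, f_{\Gamma'}^{(-)}$, the relations $\pi(f_\Gamma^{(+)}) = \pi(f_\Gamma^{(-)})$ and $\pi(f_{\Gamma'}^{(+)}) = \pi(f_{\Gamma'}^{(-)})$ force $\pi(M) = \pi(N)$, so $M - N \in I_G$ is a homogeneous binomial of strictly smaller degree. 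The rearrangement
$$f_\Gamma = M\, f_{\Gamma'} + f_{\Gamma'}^{(-)} (M - N)$$
combined with the inductive hypothesis applied to $M - N$ (and with $f_{\Gamma'}$ already primitive) places $f_\Gamma$ in the ideal generated by primitive binomials, completing the argument. I expect the main obstacle to be the middle paragraph: verifying rigorously that the balanced multigraph decomposes into genuinely \emph{colour-alternating} even closed walks rather than arbitrary even-degree closed walks, since it is exactly the alternation that makes each piece an even closed walk in the sense defined above and yields the clean factorizations $\prod_j f_{\Gamma_j}^{(\pm)} = x^a, x^b$ needed for the telescoping.
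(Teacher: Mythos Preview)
The paper does not prove this lemma at all; it is quoted from \cite[Lemma~5.10]{HHO} and used as a black box, so there is no in-paper argument to compare your proposal against.

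Your outline is the standard proof (essentially the one in \cite{HHO}, going back to Villarreal and Ohsugi--Hibi): pass to pure binomials via the $\ZZ^d$-grading induced by $\rho$, reinterpret a pure binomial $x^a-x^b$ as a two-coloured multigraph with equal positive and negative degree at every vertex, decompose it into colour-alternating even closed walks, and finally reduce to primitive walks by induction on degree. All three stages are correct as you wrote them, including the telescoping identity and the rearrangement $f_\Gamma = M\, f_{\Gamma'} + f_{\Gamma'}^{(-)}(M-N)$.

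For the step you singled out as the likely obstacle, the alternating Euler-type extraction does go through cleanly: the balance condition ``positive degree $=$ negative degree at every vertex'' guarantees that, once you start along a positive edge at $v_0$, you can always continue at any vertex $v\neq v_0$ (the just-used colour is in surplus among used edges there, so the opposite colour has an unused representative), and when the walk finally gets stuck it must be at $v_0$ arriving along a negative edge; that yields an alternating closed walk of even length, which you remove and iterate. This is exactly what is needed for $\prod_j f_{\Gamma_j}^{(+)}=x^a$ and $\prod_j f_{\Gamma_j}^{(-)}=x^b$. One small point worth making explicit in the last paragraph: the induction hypothesis should be stated for \emph{all} binomials of $I_G$ of smaller degree (not just those of the form $f_\Gamma$), since $M-N$ need not itself equal some $f_{\Gamma''}$ but only a combination of such; with that phrasing the induction closes.
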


\medskip

Finally, we recall the condition for the normality of $\kk[G]$.
We say that $G$ satisfies the \textit{odd cycle condition} if for any pair of odd cycles $C$ and $C'$ in $G$, 
if $C$ and $C'$ do not share any common vertex, then there is a bridge between $C$ and $C'$, 
i.e., there exist $v \in V(C)$ and $v' \in V(C')$ such that $\{v,v'\} \in E(G)$. 

We see that $\kk[G]$ is normal if and only if 
\begin{align}\label{eq:normal}
\ZZ_{\geq 0}A_G = \QQ_{\geq 0}A_G \cap \ZZ A_G
\end{align}
holds. (See, e.g., \cite[Section 6.1]{BH}.) On the normality of $\kk[G]$, the following is known: 
\begin{theorem}[{\cite{OH98, SVV}}]
A graph $G$ satisfies \eqref{eq:normal} if and only if $G$ satisfies the odd cycle condition. 
\end{theorem}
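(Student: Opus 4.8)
The plan is to prove the two implications separately, identifying, as usual, $\kk[G]$ with the monoid algebra of $\ZZ_{\geq 0}A_G$, so that a vector $\mathbf{b}=(b_v)_{v\in V(G)}\in\ZZ_{\geq 0}^d$ lies in $\ZZ_{\geq 0}A_G$ if and only if there exist $a_e\in\ZZ_{\geq 0}$ with $\sum_{e\ni v}a_e=b_v$ for all $v$; that is, $\mathbf{b}$ is the degree vector of a subgraph of $G$ with edges allowed to repeat. If $G$ is bipartite it contains no odd cycle, so the odd cycle condition holds vacuously and normality is classical; hence I may assume $G$ is non-bipartite. In that case one checks directly from the generators that $\ZZ A_G=\{\mathbf{b}\in\ZZ^d:\sum_v b_v\text{ is even}\}$: each $\rho(e)$ has coordinate sum $2$, while the alternating sum of $\rho(e)$ around an odd cycle produces a vector $2\eb_v$, and together with the differences $\eb_u-\eb_v=\rho(\{u,v\})-2\eb_v$ (using connectedness) these generate the full even-sum lattice.

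For the implication ``normal $\Rightarrow$ odd cycle condition'' I would prove the contrapositive. Suppose the odd cycle condition fails: there are odd cycles $C,C'$ with $V(C)\cap V(C')=\varnothing$ and no edge of $G$ joining $V(C)$ to $V(C')$. Put $\mathbf{w}=\sum_{v\in V(C)\cup V(C')}\eb_v$. Since each vertex of a cycle meets exactly two of its edges, $\tfrac12\sum_{e\in C}\rho(e)=\sum_{v\in V(C)}\eb_v$, and likewise for $C'$, so $\mathbf{w}\in\QQ_{\geq 0}A_G$; moreover $\sum_v w_v=|V(C)|+|V(C')|$ is even, so $\mathbf{w}\in\ZZ A_G$. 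If $\mathbf{w}\in\ZZ_{\geq 0}A_G$, then since $w_v\leq 1$ every coefficient $a_e$ is $0$ or $1$ and the chosen edges form a perfect matching of $V(C)\cup V(C')$; as no edge crosses between the two cycles, this matching restricts to a perfect matching of $V(C)$, which is impossible because $|V(C)|$ is odd. Thus $\mathbf{w}$ witnesses the failure of \eqref{eq:normal}, and $\kk[G]$ is not normal.

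For the converse I would take $\mathbf{b}\in\QQ_{\geq 0}A_G\cap\ZZ A_G$ and show $\mathbf{b}\in\ZZ_{\geq 0}A_G$. The polytope $P=\{a\in\QQ_{\geq 0}^{E(G)}:\sum_{e\ni v}a_e=b_v\ \forall v\}$ is nonempty (as $\mathbf{b}$ lies in the cone) and bounded (since $\sum_e a_e=\tfrac12\sum_v b_v$ is fixed), so it has a vertex $\mathbf{a}$; by the half-integrality of the fractional $b$-matching polytope, $\mathbf{a}$ is half-integral and the edges carrying the value $\tfrac12$ form a disjoint union of odd cycles $C_1,\ldots,C_m$. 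Writing $\mathbf{b}=\sum_e\lfloor a_e\rfloor\rho(e)+\sum_{i}\sum_{v\in V(C_i)}\eb_v$, the first sum already lies in $\ZZ_{\geq 0}A_G$, so it suffices to realize $\sum_i\sum_{v\in V(C_i)}\eb_v$. Because this vector lies in $\ZZ A_G$ its coordinate sum $\sum_i|V(C_i)|$ is even, and as each $|V(C_i)|$ is odd the number $m$ of cycles is even. I then pair the $C_i$; for a pair that is vertex-disjoint the odd cycle condition supplies a bridge $\{p,q\}$ with $p\in C_i$, $q\in C_j$, and taking this bridge once together with a perfect matching of the even vertex sets $V(C_i)\setminus\{p\}$ and $V(C_j)\setminus\{q\}$ along the respective cycles realizes $\sum_{v\in V(C_i)\cup V(C_j)}\eb_v$ as an honest subgraph, i.e. as an element of $\ZZ_{\geq 0}A_G$. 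Summing over the pairs gives $\mathbf{b}\in\ZZ_{\geq 0}A_G$.

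The main obstacle is the converse, and within it the reduction to vertex-disjoint half-integral odd cycles: justifying that an extreme point of $P$ has exactly this structure (the classical half-integrality of fractional matchings) and then handling the pairing when the chosen odd cycles are not vertex-disjoint, where instead of a literal bridge one must route through the connecting part of $G$ while keeping the intermediate vectors inside $\QQ_{\geq 0}A_G$; the cone description \eqref{eq:ineq} from \cite[Theorem 1.7 (a)]{OH98} is the tool that certifies membership in the cone at each step. This is exactly the place where the odd cycle condition enters decisively, mirroring its role in the first implication.
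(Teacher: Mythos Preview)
The paper does not prove this theorem at all; it is quoted from \cite{OH98,SVV} as background, so there is no argument in the paper to compare your attempt with.

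Your outline is essentially the standard proof and is correct in substance. Two remarks. First, a small imprecision: at a vertex $\mathbf{a}$ of $P$ the relevant set is $\{e:a_e\notin\ZZ\}$, not $\{e:a_e=\tfrac12\}$; half-integrality only says $a_e\in\tfrac12\ZZ$, and then $a_e-\lfloor a_e\rfloor=\tfrac12$ on those edges, which is what your decomposition actually uses. Second, the obstacle you flag at the end does not occur. For a vertex of $P$ the edges in the support index a linearly independent set of columns of the incidence matrix; this forces each connected component of the support to be a tree or to contain a single odd cycle, and on the tree parts the values are integral. Hence the odd cycles carrying the half-integer values lie in distinct components of the support and are automatically vertex-disjoint in $G$. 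The odd cycle condition then supplies a bridge for every pair and your matching construction finishes the argument without any appeal to the facet description \eqref{eq:ineq}. So your ``main obstacle'' is already resolved by the half-integrality step you invoked; once that structure is stated precisely, the converse direction is complete.
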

Remark that $\kk[G]$ is a normal Cohen--Macaulay homogeneous domain if $G$ satisfies the odd cycle condition. 
Cohen--Macaulayness follows from Hochster's theorem (see \cite[Theorem 6.3.5]{BH}).

\subsection{Fundamental properties of $\kk[\calG_n]$}

Now, let us focus on our graph $\calG_n$. We identify the edges of $\calG_n$ with the variables of the polynomial ring $S$, as depicted in Figure~\ref{fig1}. 
Namely, we consider $$S=\kk[x_1,y_1,z_1,\ldots,x_n,y_n,z_n]$$ and we regard $I_{\calG_n}$ as an ideal of $S$. 

For $\mathcal{G}_{n}$, we see that, from \cite[Lemma 5.11]{HHO}, 
every primitive even closed walk consists of two $3$-cycles with exactly one common vertex, and is given by 
$$(x_{i},z_{i},y_{i},x_{j},z_{j},y_{j}); \hspace{0.3cm}1\leq i < j \leq n.$$
Hence, the toric ideal $I_{\mathcal{G}_{n}}$ is generated by the binomials:
\begin{align}\label{binom:G_n}
x_{i}y_{i}z_{j}-z_{i}x_{j}y_{j}; \hspace{0.3cm}1\leq i < j\leq n .
\end{align}
Let $<_\text{lex}$ be the graded lexicographic order on $S$ induced by the ordering of the variables 
\begin{align}\label{eq:lex} x_{1}<_\text{lex}y_{1}<_\text{lex}z_{1}<_\text{lex}\dots <_\text{lex} x_{n}<_\text{lex}y_{n}<_\text{lex}z_{n}. \end{align}
For the fundamental materials on initial ideals and Gr\"{o}bner basis, consult, e.g., \cite[Section 1]{HHO}. 
\begin{lemma}
The binomials in \eqref{binom:G_n} form a Gr\"{o}bner basis of $I_{\mathcal{G}_{n}}$ with respect to the monomial order $<_\text{lex}$.  
\end{lemma}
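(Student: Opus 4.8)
The plan is to verify Buchberger's criterion for the set $\mathcal{B} = \{g_{ij} := x_i y_i z_j - z_i x_j y_j : 1 \le i < j \le n\}$ with respect to $<_{\mathrm{lex}}$. First I would record the leading terms: since the variable order puts $x_k <_{\mathrm{lex}} y_k <_{\mathrm{lex}} z_k <_{\mathrm{lex}} x_{k+1}$, and all monomials here have degree $3$, the graded lex comparison of $x_i y_i z_j$ and $z_i x_j y_j$ (for $i<j$) is decided by the largest variable: $z_j$ appears on the left and $x_j, y_j <_{\mathrm{lex}} z_j$ on the right, so $\ini_{<_{\mathrm{lex}}}(g_{ij}) = x_i y_i z_j$. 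Thus every leading monomial is squarefree and of the special shape $x_i y_i z_j$ with $i<j$.

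Next I would compute $S$-polynomials. Two generators $g_{ij}$ and $g_{k\ell}$ have leading monomials $x_i y_i z_j$ and $x_k y_k z_\ell$; for these to share a variable we need either $\{i,j\} \cap \{k,\ell\} \ne \emptyset$ in the appropriate slots. The only genuinely interacting cases are (a) $i = k$, $j \ne \ell$ (common factor $x_i y_i$), and (b) $j = \ell$, $i \ne k$ (common factor $z_j$); the case $i = \ell$ or $j = k$ gives no common variable between a pair $\{x,y\}$-block and a $z$-block since $i<j$, $k<\ell$ force the indices apart in the relevant positions, so those $S$-polynomials reduce to zero trivially (coprime leading terms, by Buchberger's first criterion). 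In case (a), say $j < \ell$, the $S$-polynomial is $z_\ell \cdot g_{ij} - z_j \cdot g_{i\ell}$ up to sign, which simplifies to $-z_i x_j y_j z_\ell + z_i x_\ell y_\ell z_j = z_i(x_\ell y_\ell z_j - x_j y_j z_\ell)$, and this is exactly $\pm z_i\, g_{j\ell}$ — it reduces to zero in one step. In case (b), say $i < k$, the $S$-polynomial is $x_k y_k \cdot g_{ij} - x_i y_i \cdot g_{kj}$, which gives $-x_k y_k z_i x_j y_j + x_i y_i z_k x_j y_j = x_j y_j(x_i y_i z_k - x_k y_k z_i) = \pm x_j y_j\, g_{ik}$, again reducing to zero in one step (note $i < k$ so $g_{ik} \in \mathcal{B}$).

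Having checked all overlap types, Buchberger's criterion gives that $\mathcal{B}$ is a Gröbner basis. Since $\mathcal{B} \subseteq I_{\calG_n}$ and, by the preceding lemma, $\mathcal{B}$ generates $I_{\calG_n}$, we conclude that $\mathcal{B}$ is a Gröbner basis of $I_{\calG_n}$ with respect to $<_{\mathrm{lex}}$, with initial ideal $\ini_{<_{\mathrm{lex}}}(I_{\calG_n}) = (x_i y_i z_j : 1 \le i < j \le n)$. The only mild obstacle is bookkeeping: one must carefully enumerate which pairs of leading monomials actually overlap and confirm that in each overlapping case the resulting $S$-polynomial is a monomial multiple of a third generator $g_{ik}$ (or $g_{j\ell}$) that again lies in $\mathcal{B}$ because the indices stay correctly ordered — a quick check once the index inequalities $i<j$, $k<\ell$ are tracked, but worth doing explicitly since the conclusion (and the squarefreeness of the initial ideal, which is what Section~\ref{sec:h-poly} will exploit) hinges on it.
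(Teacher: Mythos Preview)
Your proof is correct and follows essentially the same approach as the paper: both verify Buchberger's criterion by observing that pairs with coprime leading terms reduce to zero trivially, and that in each genuinely overlapping case the $S$-polynomial is a monomial multiple of another generator in $\mathcal{B}$. The paper only spells out the case $i=k$ explicitly and declares the $j=\ell$ case ``similar'', whereas you treat both overlap types in full; otherwise the arguments coincide.
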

\begin{proof}
The result follows from the straightforward application of Buchberger's criterion to the set of generators \eqref{binom:G_n} of $I_{\calG_n}$. 
(For Buchberger's criterion, see, e.g., \cite[Theorem 1.29]{HHO}.) 

Let $f=x_{i}y_{i}z_{j}-z_{i}x_{j}y_{j}$ and $g=x_{p}y_{p}z_{q}-z_{p}x_{q}y_{q}$ be two generators. If $i\neq p$ and $j\neq q$, 
then the leading terms of $f$ and $g$ are relatively prime and thus the $S$-polynomial $S(f,g)$ will reduce to $0$ by \cite[Lemma 1.27]{HHO}. 

Suppose $i=p$. Then
%$
%\begin{array} {lcl}  S(f,g)& =& \frac{lcm(in_{<_\text{lex}}(f),in_{<_\text{lex}}(g))}{in_{<_\text{lex}}(f)}f- \frac{lcm(in_{<_\text{lex}}(f),in_{<_\text{lex}}(g))}{in_{<_\text{lex}}(g)}g \\
%    & =& z_{q}f-z_{j}g \\ 
%& = & z_{q}(x_{i}y_{i}z_{j}-z_{i}x_{j}y_{j}) - z_{j}(x_{i}y_{i}z_{q}-z_{i}x_{q}y_{q})  \\
%& = & z_{i}(x_{q}y_{q}z_{j}-z_{q}x_{j}y_{j}).
%\end{array}
%$
\begin{align*}  
S(f,g)&= \frac{\mathrm{lcm}(\ini_{<_\text{lex}}(f),\ini_{<_\text{lex}}(g))}{\ini_{<_\text{lex}}(f)}f- \frac{\mathrm{lcm}(\ini_{<_\text{lex}}(f),\ini_{<_\text{lex}}(g))}{\ini_{<_\text{lex}}(g)}g \\
& = z_{q}f-z_{j}g \\
& = z_{q}(x_{i}y_{i}z_{j}-z_{i}x_{j}y_{j}) - z_{j}(x_{i}y_{i}z_{q}-z_{i}x_{q}y_{q})  \\
& = z_{i}(x_{q}y_{q}z_{j}-z_{q}x_{j}y_{j}).
\end{align*}
Note that, up to sign, $x_{q}y_{q}z_{j}-z_{q}x_{j}y_{j}$ is a generator of $I_{\calG_{n}}$ and therefore $S(f,g)$ will reduce to $0$. The $j=q$ case is similar.
\end{proof}

\begin{corollary}
 The initial ideal $in_{<_\text{lex}}\big( I_{\mathcal{G}_{n}} \big)$ of $I_{\mathcal{G}_{n}}$ with respect to the monomial order $<_\text{lex}$ 
is generated by the squarefree monomials
\begin{align}\label{eq:initial}
x_{i}y_{i}z_{j};\hspace{0.5cm} 1\leq i < j\leq n.  
\end{align}
\end{corollary}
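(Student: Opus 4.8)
The plan is to deduce this immediately from the preceding Lemma together with the standard relationship between Gr\"obner bases and initial ideals. Recall that if a finite set $\{g_1,\dots,g_k\} \subset I$ is a Gr\"obner basis of an ideal $I \subset S$ with respect to a monomial order $<$, then by definition $\ini_<(I)$ is generated by the leading monomials $\ini_<(g_1),\dots,\ini_<(g_k)$ (see, e.g., \cite[Section 1]{HHO}). Since the preceding Lemma asserts that the binomials $x_iy_iz_j-z_ix_jy_j$ with $1\le i<j\le n$ in \eqref{binom:G_n} form a Gr\"obner basis of $I_{\calG_n}$ with respect to $<_\text{lex}$, it therefore suffices to compute the leading monomial of each such binomial.

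So the only real task is to verify that $\ini_{<_\text{lex}}(x_iy_iz_j-z_ix_jy_j)=x_iy_iz_j$ for every pair $i<j$. Both monomials $x_iy_iz_j$ and $z_ix_jy_j$ have total degree $3$, so under the graded lexicographic order the graded component does not distinguish them and we are reduced to comparing them in the lexicographic order induced by \eqref{eq:lex}. Because $i<j$, the variable $z_j$ is the largest variable (with respect to $<_\text{lex}$) occurring in either monomial. I would then simply read off the exponent of $z_j$: it equals $1$ in $x_iy_iz_j$ but $0$ in $z_ix_jy_j$, so $x_iy_iz_j>_\text{lex}z_ix_jy_j$, and the leading monomial is $x_iy_iz_j$, as claimed.

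Combining these two observations, $\ini_{<_\text{lex}}(I_{\calG_n})$ is generated by the squarefree monomials $x_iy_iz_j$ for $1\le i<j\le n$, which is exactly the assertion. There is essentially no genuine obstacle here: the entire content of the statement is packaged in the Gr\"obner basis Lemma proved just above, and the only point requiring care is fixing the orientation conventions of $<_\text{lex}$ correctly, so that $z_j$ (rather than $x_i$ or $y_i$) is recognized as the dominant variable in the comparison and the resulting generators are seen to be squarefree.
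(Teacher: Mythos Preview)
Your proposal is correct and matches the paper's approach: the corollary is stated in the paper without proof, as it is an immediate consequence of the preceding Gr\"obner basis lemma together with the identification of the leading terms (which the paper already uses implicitly when computing the $S$-polynomials). Your added verification that $z_j$ is the dominant variable under \eqref{eq:lex}, so that $\ini_{<_\text{lex}}(x_iy_iz_j - z_ix_jy_j)=x_iy_iz_j$, is exactly the missing detail one would supply, and nothing more is needed.
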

By this corollary, since the given monomial ideal is squarefree, 
we can associate a simplicial complex whose Stanley-Reisner ideal coincides with the initial ideal generated by \eqref{eq:initial}. 

%Moreover, since $I_R$ is isomorphic to the ideal generated by the monomials appearing in the relative interior of $\QQ_{\geq 0}A_G

%%%%%%%%%%%%%%%%%%%%%%%%%%%%%%%%%%%%%%%%%%%%%%%%%%%%%%%%%%%%%%%%%%%%%%%%%%%%%%%%%%%%%%%%%%%%%%%%%%%%%%%%%%%%%%%%%%%%%%%%%%%%%%%%%%%%%%%%%%%%%%%%
%%%%%%%%%%%%%%%%%%%%%%%%%%%%%%%%%%%%%%%%%%%%%%%%%%%%%%%%%%%%%%%%%%%%%%%%%%%%%%%%%%%%%%%%%%%%%%%%%%%%%%%%%%%%%%%%%%%%%%%%%%%%%%%%%%%%%%%%%%%%%%%%
%%%%%%%%%%%%%%%%%%%%%%%%%%%%%%%%%%%%%%%%%%%%%%%%%%%%%%%%%%%%%%%%%%%%%%%%%%%%%%%%%%%%%%%%%%%%%%%%%%%%%%%%%%%%%%%%%%%%%%%%%%%%%%%%%%%%%%%%%%%%%%%%
\section{Computation of the $h$-polynomial of $\kk[\calG_n]$}\label{sec:h-poly}

Let $\Delta_n$ be the simplicial complex whose Stanley-Reisner ideal coincides with the initial ideal of the toric ideal $I_{\mathcal{G}_{n}}$ with respect to $<_\text{lex}$. 
%We observe that $I_{n}$ is generated by squarefree monomials of the form \eqref{eq:initial}. 
%Hence, we can define a simplicial complex $\Delta_{n}$ whose Stanley-Reisner ideal coincides with $I_{n}$. 
(For the introduction to Stanley-Reisner theory, consult, e.g., \cite[Section 5]{BH}.) 
%Namely, $\Delta_n$ is the simplicial complex on $\{x_i,y_i,z_i : i=1,\ldots,n\}$ such that $\{x_i,y_i,z_j\}$ is not a minimal non-face of $\Delta_n$. 
%
%
Let $\mathcal{F}(\Delta_{n})$ be the set of all facets of $\Delta_{n}$. 
By definition, any facet of our simplicial complex $\Delta_{n}$ can be expressed 
as the maximal set that does not contain the triplet $\{x_{i},y_{i},z_{j}\};\hspace{0.2cm} 1\leq i < j\leq n.$ 
Since $x_{n} ,y_{n}$ and $z_{1}$ will be contained in all the facets, with out loss of generality, 
we write the facets without indicating these elements. Therefore, any $F\in \mathcal{F}(\Delta_{n})$ can be expressed as: 
$$F= \bigcup\limits_{\substack{i\in I \\1\leq i \leq n-1}}\{x_{i}\}\cup \bigcup\limits_{\substack{j\in J \\1\leq j \leq n-1}}\{y_{j}\}\cup 
\bigcup\limits_{\substack{k\in K \\2\leq k \leq n}}\{z_{k}\},$$
which is maximal and does not contain the triplet $\{x_{i},y_{i},z_{j}\};\hspace{0.2cm} 1\leq i < j\leq n.$ 
Let us try to get a more concrete representation for the facets in $\mathcal{F}(\Delta_{n})$.
Consider any $F \in \mathcal{F}(\Delta_{n})$. 

\hspace{-0.5cm}$\textbf{\underline{Case 1:}}$ 

Let $i\in I\cap J$. This implies $z_{i+1},\dots ,z_{n}\notin F$, since $F$ does not contain the triplet $\{x_{i},y_{i},z_{j}\};\hspace{0.2cm} 1\leq i < j\leq n. $

\hspace{-0.5cm}$\textbf{\underline{Case 2:}}$ 

Let us consider $i\in I$. If there exists some $k$ with $i< k\leq n$ such that $z_{k}\in F$, then we have $y_{i}\notin F,$ that is, $i\in I\backslash J.$

If $z_{k}\notin F$ for all $k$ with $i< k\leq n$, then by the maximality of $F$, we have $y_{i}\in F$ and $i\in I\cap J$.

\hspace{-0.5cm}$\textbf{\underline{Case 3:}}$ 

Let us consider $j\in J$. If there exists some $k$ with $j< k\leq n$ such that $z_{k}\in F$, then we have $x_{j}\notin F,$ that is, $j\in J\backslash I.$

If $z_{k}\notin F$ for all $k$ with $j < k\leq n$, then by the maximality of the set $F$, we have $x_{j}\in F$ and $j\in I\cap J$. 

\medskip

According to our observations from the three situations above, any facet in $\mathcal{F}(\Delta_{n})$ is of the form:
\begin{align}\label{eq:facets}\{w_{1},\dots ,w_{j-1}\}\cup \{x_{j},y_{j},\dots , x_{n-1},y_{n-1}\}\cup \{z_{2},\dots , z_{j}\},\end{align}
where $w_{i}\in\{x_{i},y_{i}\}$ and $j=1,\dots , n.$

\begin{remark}[Mac-Mullen characterization of $h$-vectors]
Here, we recall a well-known method to compute the $h$-vector of a shellable simplicial complex. 
(For example, see \cite[Corollary 5.1.14]{BH}.) 

Let $F_{1},\dots ,F_{t}$ be the facets of a pure simplicial complex $\Delta$. 
Let $\langle F_1,\ldots,F_m \rangle$ be the unique smallest simplicial complex which contains all $F_i$, $1 \leq i \leq m$. 
The ordering of the facets is said to be a \textit{shelling} if it satisfies that 
$\langle F_i \rangle \cap \langle F_1,\ldots,F_{i-1} \rangle$ is generated by a non-empty set of maximal proper faces of $F_i$ for all $2 \leq i \leq t$. 
We say that a pure simplicial complex is \textit{shellable} if it has a shelling. 
Throughout our further study, we may refer the subcomplex $\langle F_i \rangle \cap \langle F_1,\ldots,F_{i-1} \rangle$ 
as the \textit{intersection subcomplex} corresponding to the $i^{\textrm{th}}$ shelling step. 
Let $r_i$ be the number of maximal proper faces of $F_i$ in $\langle F_i \rangle \cap \langle F_1,\ldots,F_{i-1} \rangle$ for $2 \leq i \leq t$, and let $r_1=0$. 
Then the $h$-vector of $\Delta$, $h(\Delta) = (h_0,\dots ,h_d)$, is obtained by $ h_i  =  |\{j \colon r_j = i\}| .$
\end{remark}

Now, we consider an ordering of the facets $F_1^1,\ldots,F_{t_n}^n$ of $\Delta_n$. 
From the structure of each facet as shown in \eqref{eq:facets}, we have $t_n=\sum\limits_{i=0}^{n-1}2^i$ and $|F_i^n|=2n-2$ for all $1 \leq i \leq t_n$. 
Let us consdier each facet as a $(2n-2)$-tuple of $x_1,y_1,x_2,y_2,z_2,\ldots,x_{n-1}, y_{n-1}$, $z_{n-1},z_n$. Lexicographic order $<_L$ in $\mathcal{F}(\Delta_n)$ is defined by 
$$(a_1,\ldots,a_{2n-2}) <_L (b_1,\ldots,b_{2n-2})$$ 
if and only if either $a_i \neq b_i$ for some $i$ and $a_i<_\text{lex}b_i$ with respect to \eqref{eq:lex}. 
Now, we consider an ordering of the facets $F_1^n,\ldots,F_{t_n}^n$ of $\Delta_n$ 
such that they are arranged in lexicographically increasing order of their corresponding $(2n-2)$-tuple.

\medskip

Let $r^{n}_{i}$ be the number of maximal proper faces of $F^{n}_{i}$ that generates the $i^\mathrm{th}$ intersection subcomplex for $2 \leq i \leq t_n$. 
We define $\delta_{n}=\{r^{n}_2,\dots ,r^{n}_{t_{n}}\}$ with $n\geq 2$ as a multi-set. 
\begin{lemma}\label{lem:induction}
For each $n\geq 2,$ we have 
$$\delta_{n+1}=\{1,\delta_{n},2,\delta_{n}+1\},$$
where $\delta_n+1=\{\alpha+1 : \alpha \in \delta_n\}$. 
\end{lemma}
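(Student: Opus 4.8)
The plan is to set up an explicit recursive correspondence between the facets of $\Delta_{n+1}$ (in the lexicographic shelling order) and the facets of $\Delta_n$, and then to track how the intersection-subcomplex numbers $r^{n+1}_i$ behave under this correspondence. Recall from \eqref{eq:facets} that a facet of $\Delta_n$ is determined by the choice of a ``split point'' $j \in \{1,\dots,n\}$ together with a choice of $w_i \in \{x_i,y_i\}$ for each $i < j$. Writing $F^{n+1}_i$ as a $2n$-tuple in the variables $x_1,y_1,x_2,y_2,z_2,\dots,x_n,y_n,z_n,z_{n+1}$, I first observe that the lexicographically smallest facet is the one with split point $j=1$ (no $w_i$'s, all $z$'s absent except $z_2$... more precisely the facet $\{x_1,y_1,\dots,x_n,y_n,z_{n+1}\}$-type facet), giving $r^{n+1}_1=0$, and the next facet has exactly one maximal proper face in common with it, giving the leading $1$ in $\{1,\delta_n,\dots\}$.

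The key structural step is to partition $\mathcal{F}(\Delta_{n+1})$ according to the first coordinate $w_1 \in \{x_1, y_1\}$ of the $2n$-tuple (or the degenerate case). Among facets of $\Delta_{n+1}$ with split point $j \geq 2$, the choice $w_1 = x_1$ versus $w_1 = y_1$ splits them into two blocks, and within each block the remaining data $(w_2,\dots,w_{j-1}; j)$ is exactly the data of a facet of $\Delta_n$ (after relabelling indices $i \mapsto i-1$). I would show that the lexicographic order on $\Delta_{n+1}$ restricted to each block agrees with the lexicographic order on $\Delta_n$, so the two blocks, read in order, are ``copies'' of the shelling of $\Delta_n$. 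Then: for the $w_1 = x_1$ block, intersecting $F^{n+1}_i$ with the union of all earlier facets contributes the same maximal proper faces as in $\Delta_n$ — because removing/fixing the coordinate $x_1$ does not change which proper faces are needed — yielding the sub-multiset $\delta_n$. The transition from the $w_1=x_1$ block to the $w_1=y_1$ block introduces one new ``reason'' for a maximal proper face (the facet of $\Delta_n$ is being hit again, but now with $y_1$ replaced for $x_1$, and additionally the whole $x_1$-block lies in the history), so the first facet of the $y_1$-block contributes $r = 2$; this is the ``$2$'' in the statement. For the rest of the $y_1$-block, each facet has, in addition to the $\delta_n$-many maximal proper faces inherited from the $\Delta_n$-shelling, exactly one extra maximal proper face coming from the already-shelled $x_1$-block (namely the face obtained by deleting $y_1$), which shifts every entry up by $1$, giving $\delta_n + 1$.

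The main obstacle I expect is the bookkeeping in this last point: proving precisely that, for a facet $F$ in the $y_1$-block beyond the first, the set of maximal proper faces generating $\langle F \rangle \cap \langle F^{n+1}_1,\dots \rangle$ is exactly the $\delta_n$-worth of faces ``seen within the $y_1$-block'' together with one single additional face from the $x_1$-block, with no further faces and no collisions between the two sources. This requires a careful description — probably via the combinatorics of which triples $\{x_i,y_i,z_k\}$ are the minimal nonfaces — of exactly which subfacets of $F$ already appear in the union of earlier facets, separating the contribution of earlier $y_1$-facets from that of the earlier $x_1$-facets. Once that local analysis is done, the identity $\delta_{n+1} = \{1,\delta_n,2,\delta_n+1\}$ follows by concatenating the contributions in shelling order: the initial $1$, then the $w_1=x_1$ block contributing $\delta_n$, then the first $y_1$-facet contributing $2$, then the remaining $y_1$-block contributing $\delta_n + 1$. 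I would also need to note in passing that this ordering is indeed a shelling (each $r^{n+1}_i \geq 1$ for $i \geq 2$), which falls out of the same analysis since $\delta_n$ consists of positive integers by induction and $\delta_n + 1$ does a fortiori.
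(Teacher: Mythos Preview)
Your proposal is correct and follows essentially the same route as the paper: partition the facets of $\Delta_{n+1}$ into the initial facet, the $w_1=x_1$ block, and the $w_1=y_1$ block; identify each block with $\mathcal{F}(\Delta_n)$ via the index shift $i\mapsto i-1$ (this is exactly the paper's bijection $\phi$); and argue that in the $y_1$-block each intersection subcomplex acquires one extra maximal proper face $F\setminus\{y_1\}$ coming from the already-shelled $x_1$-block, yielding $r^{n+1}_{\frac{t_{n+1}+1}{2}+i}=1+r^{n+1}_{i+1}$. The ``bookkeeping obstacle'' you flag is precisely the part the paper treats most lightly, so your instinct to justify carefully that no further maximal proper faces (and no collisions) arise is well placed.
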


By induction on $n$, and using Lemma~\ref{lem:induction}, we can obtain the $h$-vector of $\Delta_n$. 
Our goal is to show that $$h(\Delta_n)=\Bigg{(}\binom{n}{0},\binom{n}{1}-1,\binom{n}{2},\binom{n}{3},\dots ,\binom{n}{n}\Bigg{)}\;\; \text{ for any} \;\; n\geq 2.$$ 

For $n=2$, since $\calF(\Delta_2)=\{\{x_1,y_1\},\{x_1,z_2\},\{y_1,z_2\}\}$, 
we obtain the $h$-vector as $(1,1,1)$ which is equal to our formula $\Big{(}\binom{2}{0},\binom{2}{1}-1,\binom{2}{2}\Big{)}$.

By the hypothesis of induction, assume that our formula holds for an arbitrary $n$. 
Therefore, the $h$-vector associated with $\Delta_{n}$ is $\Big{(}\binom{n}{0},\binom{n}{1}-1,\binom{n}{2},\binom{n}{3},\dots ,\binom{n}{n}\Big{)}$.
Let $h(\Delta_{n+1})=(h_0^{n+1},h_1^{n+1},\ldots,h_{n+1}^{n+1})$. By Lemma~\ref{lem:induction}, we see the following: 
\begin{align*}
&h^{n+1}_{0}= 1=\binom{n+1}{0}, \\
&h^{n+1}_{1}= 1+h^{n}_{1}=1+\binom{n}{1}-1=\binom{n+1}{1}-1, \\
&h^{n+1}_{2}= 1+h^{n}_{2}+h^{n}_{1}=1+\binom{n}{2}+\binom{n}{1}-1=\binom{n+1}{2}, \\
&h^{n+1}_{i}= h^{n}_{i}+h^{n}_{i-1}=\binom{n}{i}+\binom{n}{i-1}=\binom{n+1}{i} \;\;\text{for}\;\; 2\leq i\leq n.
\end{align*}
%Therefore, we have the $h$-vector associated with simplicial complex $\Delta_{n+1}$ is: $\Big{(}{n+1 \choose 0},{n+1 \choose 1}-1,{n+1 \choose 2},{n+1 \choose 3},\dots ,{n+1 \choose n}\Big{)}.$

Hence by induction, the $h$-vector associated with simplicial complex $\Delta_{n}$ is 
$$\Bigg{(}\binom{n}{0},\binom{n}{1}-1,\binom{n}{2},\binom{n}{3},\dots ,\binom{n}{n}\Bigg{)}\;\;\text{for any}\;\; n\geq 2.$$

Since the Hilbert series of $S/I$ coincides with that of $S/\ini_{<_\text{lex}}(I)$ in general (see, e.g., \cite[Proposition 2.6]{HHO}), 
we conclude the above as the desired $h$-vector of $\kk[\calG_n]$.

\bigskip

The remaining section is devoted to illustrating that our ordering of the facets gives a shelling of $\Delta_n$ and establishing Lemma~\ref{lem:induction}. 

For each $n\geq 2,$ %when we arrange the facets in lexicographically increasing order of their elements, 
we observe that $r^{n}_{2}=1$, $t_{n}=|\mathcal{F}(\Delta_{n})|=\sum\limits_{i=0}^{n-1}2^{i}$ and we have $t_{n+1}=1+2t_{n}$. 

As per our ordering, the facets are ordered in such a manner that 
the facets consisting of $x_{1}$ comes first and after the $\big(\frac{t_{n}+1}{2}\big)^{\textrm{th}}$ stage, 
the pattern of facet ordering repeats in the exact same manner as that of $F^{n}_{2},\dots ,F^{n}_{\frac{t_{n}+1}{2}}$, 
and consists of same elements except for $x_{1}$ replaced with $y_{1}$. 
More precisely, we have $F_{\frac{t_{n}+1}{2}+i}=(F_{i+1} \setminus \{x_1\}) \cup \{y_1\}$ for $1\leq i\leq \frac{t_{n}-1}{2}$. 
Therefore, the corresponding intersection subcomplex for each $k^{\textrm{th}}$ shelling step, $\frac{t_{n}+3}{2} \leq k \leq t_{n} $, 
will definitely contain the maximal face $$ F^{n}_{i+1} \setminus \{x_{1}\}\hspace{0.2cm}\text{for any }1\leq i\leq \frac{t_{n}-1}{2}.$$
Moreover, when we look at the first $\frac{t_n+1}{2}$ facets, we observe that each of the facets differs from the preceding one by just an element. Thus with these observations, we claim that $F_1^n,\ldots,F_{t_n}^n$ is a shelling of $\Delta_n$.
    
Since the ordering pattern repeats after the $\big(\frac{t_{n}+1}{2}\big)^{\textrm{th}}$ facet, for $1 \leq i \leq \frac{t_n-1}{2}$, 
we see that $r_{i+1}^n$ maximal proper faces will always be contained in the intersection subcomplex for each $k^\textrm{th}$ shelling step, $\frac{t_n+1}{2} +i \leq k \leq t_n$
and the intersection subcomplex also contains the maximal face $F_{i+1}^n \setminus \{x_1\}$. 
Therefore, we have $$r^{n}_{\frac{t_{n}+1}{2}+i}=1+r^{n}_{i+1}\hspace{0.2cm}\text{for any }1\leq i\leq \frac{t_{n}-1}{2}.$$ 

Hence, we can express $\delta_{n}$ as $$ \delta_{n}=\{1,r^{n}_{3},\dots,r^{n}_{\frac{t_{n}+1}{2}},2,r^{n}_{3}+1,\dots ,r^{n}_{\frac{t_{n}+1}{2}}+1\}.$$

The set $\mathcal{F}(\Delta_{n})$ consists of \eqref{eq:facets}. Namely, each facet $F^{n}_{k}$, $1\leq k\leq t_{n}$, can be denoted as:
$$ \bigcup\limits_{i=1}^{j-1}\{w_{i}\}\cup \bigcup\limits_{i=j}^{n-1}\{x_{i},y_{i}\}\cup\bigcup\limits_{i=2}^{j} \{z_{i}\},\hspace{0.4cm} j=1,\dots , n.$$
   
For each $1\leq j \leq n$, we have $2^{j-1}$ number of facets corresponding to it. 
   We can show that there exists a one-to-one correspondence between the facets $F^{n}_{k}$ and $F^{n+1}_{k+1}$ for all $n\geq 2$ and $ 2\leq k\leq t_{n}$. 
   The one-to-one correspondence is given by
   $$\phi \colon \Big\{F_{k}^{n}\colon 2\leq k\leq t_{n}\Big\}\longrightarrow \Big\{F_{k'}^{n+1}\colon 3\leq k'\leq \frac{t_{n+1}+1}{2}\Big\} $$
   $$ \phi (F^{n}_{k}) = F^{n+1}_{k+1},\hspace{0.5cm} 2\leq k\leq t_{n},$$
   $$  \bigcup\limits_{i=1}^{j-1}\{w_{i}\}\cup \bigcup\limits_{i=j}^{n-1}\{x_{i},y_{i}\}\cup\bigcup\limits_{i=2}^{j} \{z_{i}\} 
   \longmapsto \{x_{1}\}\cup \bigcup\limits_{i=1}^{j-1}\{w_{i+1}\}\cup \bigcup\limits_{i=j}^{n-1}\{x_{i+1},y_{i+1}\}\cup\bigcup\limits_{i=2}^{j+1} \{z_{i}\},$$
   with $j=1,\dots , n.$
   
   In order to prove this one-to-one correspondence, we show that any facet in the set, 
   $\Big\{F_{k'}^{n+1}\colon 3\leq k'\leq \frac{t_{n+1}+1}{2}\Big\}$ will be of the form 
   $$\{x_{1}\}\cup \bigcup\limits_{i=1}^{j-1}\{w_{i+1}\}\cup \bigcup\limits_{i=j}^{n-1}\{x_{i+1},y_{i+1}\}\cup\bigcup\limits_{i=2}^{j+1} \{z_{i}\},$$
   corresponding to the facet $\bigcup\limits_{i=1}^{j-1}\{w_{i}\}\cup \bigcup\limits_{i=j}^{n-1}\{x_{i},y_{i}\}\cup\bigcup\limits_{i=2}^{j} \{z_{i}\} \; (2\leq j \leq n)$
   in the set $\Big\{F_{k}^{n}\colon 2\leq k\leq t_{n}\Big\}$. 
   
   According to our shelling, any facet of the simplicial complex $\Delta_{n+1}$ containing the element $\{x_{1}\}$ 
   belongs to $\Big\{F_{k}^{n+1}\colon 1\leq k\leq \frac{t_{n+1}+1}{2}\Big\}$ with $1\leq j\leq n+1$. 
   So, when we consider some $F^{n+1}_{k'}\in \Big\{F_{k}^{n+1}\colon 3\leq k\leq \frac{t_{n+1}+1}{2}\Big\}$, it can be expressed as:
   $$ F^{n+1}_{k'} = \{x_{1}\}\cup \bigcup\limits_{i=2}^{j'-1}\{w_{i}\}\cup \bigcup\limits_{i=j'}^{n}\{x_{i},y_{i}\}\cup\bigcup\limits_{i=2}^{j'} \{z_{i}\},\hspace{0.3cm} 3\leq j'\leq n+1.$$
   We can write it as
   \begin{align*}
   F^{n+1}_{k'}& = \{x_{1}\}\cup \bigcup\limits_{i=2}^{j'-1}\{w_{i}\}\cup \bigcup\limits_{i=j'}^{n}\{x_{i},y_{i}\}\cup\bigcup\limits_{i=2}^{j'} \{z_{i}\} \\
   & = \{x_{1}\}\cup \bigcup\limits_{i=1}^{j'-2}\{w_{i+1}\}\cup \bigcup\limits_{i=j'-1}^{n-1}\{x_{i+1},y_{i+1}\}\cup\bigcup\limits_{i=2}^{j'} \{z_{i}\} \;\;\text{for}\;\; 3\leq j'\leq n+1.
   \end{align*}
   By expressing it in terms of $2\leq j \leq n$, we have
   $$F^{n+1}_{k'}=\{x_{1}\}\cup \bigcup\limits_{i=1}^{j-1}\{w_{i+1}\}\cup \bigcup\limits_{i=j}^{n-1}\{x_{i+1},y_{i+1}\}\cup\bigcup\limits_{i=2}^{j+1} \{z_{i}\}
   \;\text{ for }\;2\leq j\leq n.$$

Hence, we have the one-to-one correspondence. This one-to-one correspondence guarantees that $r^{n+1}_{i+1}=r^{n}_{i}$ for $2\leq i\leq t_{n}.$

We also observe that 
   $$t_{n+1}=1+2t_{n} \implies t_{n}=\frac{t_{n+1}-1}{2} \implies t_{n}+1=\frac{t_{n+1}+1}{2}.$$
From all the above observations, we have 
   \begin{align*}
   \delta_{n+1}&=\{1,r^{n+1}_{3},\dots ,r^{n+1}_{t_{n}+1},2,r^{n+1}_{3}+1,r^{n+1}_{t_{n}+1}+1\} \\
   \implies\delta_{n+1}&=\{1,r^{n}_{2},\dots ,r^{n}_{t_{n}},2,r^{n}_{2}+1,r^{n}_{t_{n}}+1\} \\
   \implies\delta_{n+1}&=\{1,\delta_{n},2,\delta_{n}+1\}.
   \end{align*}

\bigskip

%%%%%%%%%%%%%%%%%%%%%%%%%%%%%%%%%%%%%%%%%%%%%%%%%%%%%%%%%%%%%%%%%%%%%%%%%%%%%%%%%%%%%%%%%%%%%%%%%%%%%%%%%%%%%%%%%%%%%%%%%%%%%%%%%%%%%%%%%%%%%%%%
%%%%%%%%%%%%%%%%%%%%%%%%%%%%%%%%%%%%%%%%%%%%%%%%%%%%%%%%%%%%%%%%%%%%%%%%%%%%%%%%%%%%%%%%%%%%%%%%%%%%%%%%%%%%%%%%%%%%%%%%%%%%%%%%%%%%%%%%%%%%%%%%
%%%%%%%%%%%%%%%%%%%%%%%%%%%%%%%%%%%%%%%%%%%%%%%%%%%%%%%%%%%%%%%%%%%%%%%%%%%%%%%%%%%%%%%%%%%%%%%%%%%%%%%%%%%%%%%%%%%%%%%%%%%%%%%%%%%%%%%%%%%%%%%%
\section{On the almost Gorensteinness of $\kk[\calG_n]$}\label{sec:almGor}

In this section, we are in the process of obtaining a theoretical proof to state that $\kk[\mathcal{G}_{n}]$ is almost Gorenstein for all $n\geq 2$. 

We first recall the necessary and sufficient condition for a homogeneous ring to be almost Gorenstein from \cite{H}. 
\begin{prop}[{\cite[Corollary 2.7]{H}}]\label{prop:alm}
Let $R$ be a Cohen--Macaulay homogeneous ring of dimension $d$ over a field $\kk$ and let $h(R)=(h_0,h_1,\ldots,h_s)$ be its $h$-vector. 
Then $R$ is almost Gorenstein if and only if the following equality holds: 
\begin{align*}
r(R)-1=\sum_{j=0}^{s-1}((h_s+\cdots+h_{s-j})-(h_0+\cdots+h_j)) =:\tilde{e}(R). 
\end{align*}
Note that $r(R)-1 \leq \tilde{e}(R)$ is always satisfied, so almost Gorensteinness is equivalent to the inequality $r(R) \geq \tilde{e}(R) + 1$. 
\end{prop}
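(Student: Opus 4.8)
The plan is to unwind the Goto--Takahashi--Taniguchi definition of almost Gorensteinness into a statement about the cokernel of a canonical embedding, and then translate that statement into the asserted numerical identity through Hilbert series. Recall that $R$ is almost Gorenstein (as a graded ring) precisely when there is a graded exact sequence
$$0 \to R \xrightarrow{\varphi} \omega_R(-a) \to C \to 0,$$
where $\omega_R$ is the canonical module, $a=a(R)$ is the $a$-invariant, and $C$ is either $0$ (the Gorenstein case) or an Ulrich module, i.e.\ a Cohen--Macaulay module with $\mu(C)=e(C)$, where $\mu$ denotes the minimal number of generators and $e$ the multiplicity. The whole statement will then follow once I express $\mu(C)$ and $e(C)$ in terms of the $h$-vector of $R$ and invoke the classical bound $\mu(C)\le e(C)$ for maximal Cohen--Macaulay modules, whose equality case is exactly the Ulrich property.

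First I would compute the Hilbert series of the modules in play. By graded local duality one has $H_{\omega_R}(t)=(-1)^dH_R(t^{-1})$, and since $a(R)=s-d$ for a standard graded Cohen--Macaulay ring, a direct manipulation shows that $\omega_R(-a)$ has Hilbert series $\tilde h(t)/(1-t)^d$, where $\tilde h(t)=h_s+h_{s-1}t+\cdots+h_0t^s$ is the reverse of $h(t)=h_0+\cdots+h_st^s$. Taking the alternating sum of Hilbert series along the exact sequence then yields
$$H_C(t)=\frac{\tilde h(t)-h(t)}{(1-t)^d}.$$
Because $\tilde h(1)=h(1)$, the numerator is divisible by $1-t$; writing $g(t)=(\tilde h(t)-h(t))/(1-t)$, this shows $\dim C=d-1$ with multiplicity $e(C)=g(1)$. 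Expanding $g(1)$ as iterated partial sums gives exactly $e(C)=\sum_{j=0}^{s-1}\bigl((h_s+\cdots+h_{s-j})-(h_0+\cdots+h_j)\bigr)=\tilde e(R)$.

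Next I would pin down $\mu(C)$. The element $\varphi(1)$ lies in degree $0$ of $\omega_R(-a)$, which is its bottom degree; since the irrelevant maximal ideal contributes nothing in degree $0$, every nonzero degree-$0$ element is a minimal generator. Hence for any injective $\varphi$ the class of $\varphi(1)$ is part of a minimal generating set, and applying $-\otimes_R\kk$ to the sequence gives $\mu(C)=\mu(\omega_R(-a))-1=r(R)-1$. Now the classical inequality $\mu(M)\le e(M)$ for maximal Cohen--Macaulay modules $M$ (here $C$ viewed over $R/\mathrm{ann}(C)$), together with the fact that equality holds iff $M$ is Ulrich, delivers both halves at once: it reads $r(R)-1\le\tilde e(R)$, and equality is equivalent to $C$ being Ulrich, i.e.\ to $R$ being almost Gorenstein. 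This is precisely the claimed equivalence.

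The main obstacle is the Cohen--Macaulayness of the cokernel $C$, on which both the multiplicity reading of $e(C)$ and the bound $\mu(C)\le e(C)$ depend, and its interplay with the choice of $\varphi$. The values $\mu(C)=r(R)-1$ and $e(C)=\tilde e(R)$ are forced by the Hilbert series and the bottom-degree argument, hence independent of the injection chosen; what genuinely needs care is (i) securing an injection $R\hookrightarrow\omega_R(-a)$ with $C$ Cohen--Macaulay, so that the unconditional inequality $r(R)-1\le\tilde e(R)$ is available — this is where one uses that $R$ is generically Gorenstein (automatic for the domains relevant to this paper), allowing $\varphi$ to be an isomorphism in codimension $0$ — and (ii) the converse implication, that the numerical equality $r(R)-1=\tilde e(R)$ actually produces a sequence whose cokernel is Ulrich and not merely a Cohen--Macaulay module matching the generator/multiplicity count. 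I would settle (i) by a general-position choice of $\varphi$ and (ii) by noting that, once $C$ is known to be Cohen--Macaulay with $\mu(C)$ and $e(C)$ already determined, the Ulrich condition collapses exactly to the equality of these two integers.
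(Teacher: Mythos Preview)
The paper does not prove this proposition at all: it is quoted verbatim as \cite[Corollary 2.7]{H} and used as a black box in Section~\ref{sec:almGor}. There is therefore nothing in the present paper to compare your argument against.

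That said, your sketch is essentially the standard route to this result (and presumably close to what appears in the cited reference): read off the Hilbert series of $C=\omega_R(-a)/R$ to get $e(C)=\tilde e(R)$, observe that $\varphi(1)$ is a minimal generator of $\omega_R(-a)$ so $\mu(C)=r(R)-1$, and then invoke $\mu\le e$ with equality characterizing Ulrich. You have also correctly identified the genuine technical point --- the existence of an injection $R\hookrightarrow\omega_R(-a)$ with Cohen--Macaulay cokernel, which is what makes the inequality $r(R)-1\le\tilde e(R)$ unconditional. For the applications in this paper the ring $\kk[\calG_n]$ is a normal domain, hence generically Gorenstein, so that hypothesis is harmless here; in the general homogeneous Cohen--Macaulay setting one has to be a bit more careful, but that is exactly the content of the argument in \cite{H}.
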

Let us assume that $R$ is a domain. Then there is an ideal $I_R$ which is isomorphic to a canonical module of $R$ as an $R$-module. 
We know that the number of elements of a minimal system of generators of $I_R$ coincides with $r(R)$. 

\bigskip

Let us consider $R=\kk[\calG_n]$. Note that $\calG_n$ satisfies the odd cycle condition, so $\kk[\calG_n]$ is normal. 
By the first part of Theorem~\ref{thm:main}, we can compute $\tilde{e}(R)$ as follows: 
\begin{align*}
&\sum_{j=0}^{n-1}\left\{\left(\binom{n}{n}+\cdots+\binom{n}{n-j}\right) - \left(\binom{n}{0}+\big(\binom{n}{1}-1\big)+\cdots+\binom{n}{j}\right)\right\}\\
=&\sum_{j=1}^{n-2}1=n-2. 
\end{align*}
Hence by Proposition~\ref{prop:alm}, it is enough to show that $r(R) \geq n-1$. 
Furthermore, $r(R)$ is equal to the number of elements of a minimal system of generators of $I_R$, 
which is the relative interior of $\QQ_{\geq 0}A_{\calG_n} \cap \ZZ A_{\calG_n}$ (see \cite[Theorem 6.3.5]{BH}). 
Let $C=\QQ_{\geq 0}A_{\calG_n} \subset \RR^{2n+1}$. 
In what follows, it suffices to show that we need at least $(n-1)$ elements as a minimal system of generators for the relative interior of the cone $C$. 

Let us denote the vertices of $\calG_n$ as follows: 
\begin{align*}
&V(\calG_n)=\{u_i^{(1)},u_i^{(2)} : i=1,\ldots,n\} \cup \{w\} \;\;\text{ and we let } \\
&x_i=\{w,u_i^{(1)}\}, \; y_i=\{w,u_i^{(2)}\} \text{ and }z_i=\{u_i^{(1)},u_i^{(2)}\} \text{ for } i=1,\ldots,n. 
\end{align*}
We use the following notation for each entry of $\RR^{2n+1}$: 
\begin{align*}
\RR^{2n+1}=\{c_{1,1}\eb_{1,1}+\cdots+c_{1,n}\eb_{1,n}+c_{2,1}\eb_{2,1}+\cdots+c_{2,n}\eb_{2,n}+c'\eb' : c_{1,i},c_{2,i}, c' \in \RR\}, 
\end{align*}
where $\eb_{1,i},\eb_{2,i},\eb'$ are the unit vectors of $\RR^{2n+1}$, 
each $\eb_{1,i}$ (resp. $\eb_{2,i}$) corresponds to $u_i^{(1)}$ (resp. $u_i^{(2)}$) and $\eb'$ corresponds to $w$. 

%Let $P=P_{\calG_n}$ and let $d=\dim P$. Then we see that $d=2n$. 
%Note that $P \subset \RR^{2n+1}$. 
%Since each entry of the point in $P$ is indexed by the vertices of $\calG_n$, 

For $j=1,\ldots,n-1$, let $$\alpha_j:=\sum_{i=1}^n(\eb_{1,i}+\eb_{2,i})+2j\eb'.$$
%In what follows, we verify that $\alpha_j \in (n+j)P^\circ \cap \ZZ^{2n+1}$ and those correspond to a minimal system of generators of $I_R$. 
In what follows, we verify that $\alpha_j \in C^\circ \cap \ZZ^{2n+1}$, where $C^\circ$ denotes the relative interior of $C$,  
and they should be included in a minimal system of generators of $I_R$. %(Notice that since the sum of the entries of $\alpha_j$ is $2(n+j)$.) 

\noindent
{\bf The first step}: We check that $\alpha_j \in C^\circ$. Here, we see the following: 
\begin{itemize}
\item Each of $u_i^{(1)}$s and $u_i^{(2)}$s is a regular vertex of $\calG_n$, while $w$ is not. 
\item A subset $T$ of $V(\calG_n)$ is fundamental if and only if $T=\{w\}$ or $T=\{u_1,\ldots,u_n\}$, where $u_i \in \{u_i^{(1)},u_i^{(2)}\}$ for each $i$. 
\end{itemize}
Hence, it follows from \eqref{eq:ineq} that 
$\sum_{i=1}^n(c_{1,i}\eb_{1,i}+c_{2,i}\eb_{2,i})+c'\eb' \in \RR^{2n+1}${\color{blue},} belongs to $C$ if and only if the following inequalities are satisfied: 
%\begin{align*}\ell P=&\left\{\sum_{i=1}^n(x_{1,i}+x_{2,i})+x_{n+1}=2\ell\right\} \cap \bigcap_{i=1}^n \{x_{1,i} \geq 0\} \cap \bigcap_{i=1}^n \{x_{2,i} \geq 0\} \\
%&\cap \bigcap_{U \subset [n]}^n \left\{\sum_{i=1}^nx_i'+x_{n+1} \geq \sum_{i=1}^nx''_i\right\} \cap \left\{\sum_{i=1}^n(x_{1,i}+x_{2,i}) \geq x_{n+1}\right\},\end{align*}
\begin{equation}\label{eq:ineq_facets}
\begin{split}
&c_{1,i} \geq 0 \;\;\text{and}\;\; c_{2,i} \geq 0 \;\;\text{ for any }i=1,\ldots,n, \\
&\sum_{i=1}^n(c_{1,i}+c_{2,i}) \geq c', \\
&\sum_{i \in U}c_{i,1}+\sum_{i \in [n] \setminus U}c_{i,2}+c' \geq \sum_{i \in [n] \setminus U}c_{i,1}+\sum_{i \in U}c_{i,2} \;\;\text{ for any }U \subset [n].
\end{split}
\end{equation}
It is straightforward to check that $\alpha_j$ satisfies these inequalities with strict inequalities for each $j$. This implies that $\alpha_j \in C^\circ$. 

\noindent
{\bf The second step}: We prove that $\alpha_j$ cannot be written 
as a sum of an element in $C^\circ \cap \ZZ^{2n+1}$ and an element in $C \cap \ZZ^{2n+1} \setminus \{{\bf 0}\}$. 

Suppose that $\alpha_j=\alpha' + \beta$ for some $\alpha' \in C^\circ \cap \ZZ^{2n+1}$ and $\beta \in C \cap \ZZ^{2n+1} \setminus \{{\bf 0}\}$. 
Let $$\alpha'=\sum_{i=1}^na_{1,i}'\eb_{1,i}+\sum_{i=1}^na_{2,i}'\eb_{2,i}+a'\eb' \;\text{ and }\;\beta=\sum_{i=1}^nb_{1,i}\eb_{1,i}+\sum_{i=1}^nb_{2,i}\eb_{2,i}+b\eb'.$$ 
Then we see that $a_{1,i}' \geq 1$ and $a_{2,i}' \geq 1$ for $1 \leq i \leq n$ (see \eqref{eq:ineq_facets}). Hence, $b_{1,i} \leq 0$ and $b_{2,i} \leq 0$. 
On the other hand, $b_{1,i} \geq 0$ and $b_{2,i} \geq 0$ should be also satisfied, so we obtain that $\beta=b\eb'$. 
Since $\beta \neq {\bf 0}$, by the second inequality of \eqref{eq:ineq_facets}, we have $b<0$, a contradiction to the third inequality.

\noindent
{\bf The third step}: By the first and second steps, we see that $\alpha_1,\ldots,\alpha_{n-1}$ are required for a minimal system of generators of $I_R$.  
This shows that $r(R) \geq n-1$, as required. 

\bigskip

\section*{Acknowledgements}
The first named author is partially supported by JSPS Grant-in-Aid for Scientists Research (B) 18H01134 and Scientists Research (C) 20K03513.

\bigskip

\end{document}